\newcommand{\Z}{\mathbb{Z}}
\newcommand{\N}{\mathbb{N}}
\newtheorem{defi}{Definition}[section]
\newtheorem{lemma}[defi]{Lemma}
\newtheorem{prop}[defi]{Proposition}
\newtheorem{theorem}[defi]{Theorem}
\newtheorem{rem}[defi]{Remark}
\begin{document}

\title{Note on AR(1)-characterisation of stationary processes and model fitting}

\renewcommand{\thefootnote}{\fnsymbol{footnote}}

\author{Marko Voutilainen\footnotemark[1]\, and \, Lauri Viitasaari\footnotemark[2]\, and \, Pauliina Ilmonen\footnotemark[1]}

\footnotetext[1]{Department of Mathematics and Systems Analysis, Aalto University School of Science, Finland}

\footnotetext[2]{Department of Mathematics and Statistics, University of Helsinki, 
Finland}

\maketitle

\begin{abstract}
\noindent
It was recently proved that any strictly stationary stochastic process can be viewed as an autoregressive process of order one with coloured noise. Furthermore, it was proved that, using this characterisation, one can define closed form estimators for the model parameter based on autocovariance estimators for several different lags. However, this estimation procedure may fail in some special cases. In this article we provide a detailed analysis of these special cases. In particular, we prove that these cases correspond to degenerate processes.
\end{abstract}

{\small
\medskip

\noindent
\textbf{AMS 2010 Mathematics Subject Classification:} (Primary) 60G10, (Secondary) 62M10  

\medskip

\noindent
\textbf{Keywords:}
stationary processes, covariance functions
}




\section{Introduction}
Stationary processes are important tool in many practical applications of time series analysis, and the topic is extensively studied in the literature. Traditionally, stationary processes are modelled by using autoregressive moving average processes or linear processes (see monographs \cite{brockwell,hamilton1994time} for details). 

One of the most simple example of an autoregressive moving average process is an autoregressive process of order one. That is, a process $(X_t)_{t\in\Z}$ defined by  
\begin{equation}
\label{eq:ar1-intro-standard}
X_t = \phi X_{t-1} + \varepsilon_t,\quad t\in\Z,
\end{equation}
where $\phi\in(-1,1)$ and $(\varepsilon_t)_{t\in\Z}$ is a sequence of independent and identically distributed square integrable random variables. The continuous time analogue of \eqref{eq:ar1-intro-standard} is called the Ornstein-Uhlenbeck process, which can be defined as the stationary solution of the Langevin-type stochastic differential equation 
\begin{equation}
\label{eq:langevin-intro}
d U_t = - \phi U_t dt + dW_t,
\end{equation}
where $\phi>0$ and $(W_t)_{t\in \mathbb{R}}$ is a two-sided Brownian motion. Such equations have also applications in mathematical physics.

Statistical inference for AR(1)-process or Ornstein-Uhlenbeck process is well-established in the literature.  Furthermore, recently a generalised continuous time Langevin equation, where the Brownian motion $W$ in \eqref{eq:langevin-intro} is replaced with a more general driving force $G$, have been a subject of active study. Especially, the so-called fractional Ornstein-Uhlenbeck processes introduced by \cite{Cheridito-Kawaguchi-Maejima-2003} have been studied extensively. For parameter estimation in such models, we mention a recent monograph \cite{julia} dedicated to the subject, and the references there in.

When the model becomes more complicated, the number of parameters increases and the estimation may become a challenging task. For example, it may happen that standard maximum likelihood estimators cannot be expressed in closed form \cite{brockwell}. Even worse, it may happen that classical estimators such as maximum likelihood or least squares estimators are biased and not consistent (cf. \cite{sole-et-al} for discussions on the generalised ARCH-model with fractional Brownian motion driven liquidity). One way to tackle such problems is to consider one parameter model, and to replace white noise in \eqref{eq:ar1-intro} with some other stationary noise. It was proved in \cite{vouti} that each discrete time strictly stationary process 
can be characterised by
\begin{equation}
\label{eq:ar1-intro}
X_t = \phi X_{t-1} + Z_t,
\end{equation}
where $\phi\in(0,1)$. This representation can be viewed as a discrete time analogue of the fact that Langevin-type equation characterises strictly stationary processes in continuous time \cite{viitasaari-ss}. 

The authors in \cite{vouti} applied characterisation \eqref{eq:ar1-intro} to model fitting and parameter estimation. The presented estimation procedure is straightforward to apply with the exception of certain special cases.
 The purpose of this paper is to provide a comprehensive analysis of these special cases. In particular, we show that such cases do not provide very useful models. This highlights the wide applicability of characterization \eqref{eq:ar1-intro} and the corresponding estimation procedure. 
 
The rest of the paper is organised as follows. In Section \ref{sec:results} we briefly discuss the motivating estimation procedure of \cite{vouti}. We also present and discuss our main results together with some illustrative figures. All the proofs and technical lemmas are postponed to Section \ref{sec:proofs}.

\section{Motivation and formulation of the main results}
\label{sec:results}
Let $X = (X_t)_{t\in\Z}$ be a stationary process. It was shown in \cite{vouti} that equation
\begin{equation}
\label{eq:ar1}
X_t = \phi X_{t-1} + Z_t,
\end{equation}
where $\phi\in(0,1)$ and $Z_t$ is another stationary process, characterises all discrete time (strictly) stationary processes. Throughout this paper we suppose that $X$ and $Z$ are square integrable processes with autocovariance functions $\gamma(\cdot)$ and $r(\cdot)$, respectively. Using Equation \eqref{eq:ar1}, one can derive Yule-Walker type equations for the parameter $\phi$, which can be solved in an explicit form. Namely, for any $m\in \Z$ such that $\gamma(m)\neq 0$ we have
\begin{equation}
\label{eq:phi}
\phi = \frac{\gamma(m+1)+\gamma(m-1)\pm \sqrt{(\gamma(m+1)+\gamma(m-1))^2-4\gamma(m)(\gamma(m)-r(m))}}{2\gamma(m)}.
\end{equation}
The estimation of the parameter $\phi$ is obvious from \eqref{eq:phi} provided that 
one can determine which sign, plus or minus, one should choose. In practice, this can be done by choosing different lags $m$ for which to estimate the covariance function $\gamma(m)$. Then one can determine the correct value $\phi$ by comparing different signs in \eqref{eq:phi} for different lags $m$ (We refer to \cite[p. 387]{vouti} for detailed discussion). However, this approach fails, i.e. one cannot find suitably chosen lags leading to the correct choice of the sign and only one value $\phi$, if, for $m\in\Z$ such that $\gamma(m)=0$ we also have $r(m)=0$, and for any $m\in \Z$ such that $\gamma(m)\neq 0$, the ratio
\begin{equation}
\label{eq:r-gamma-ratio}
a_m = \frac{r(m)}{\gamma(m)} = a
\end{equation}
for some constant $a\in(0,1)$. The latter is equivalent \cite[p. 387]{vouti} to the fact that 
\begin{equation}
\label{eq:gamma-ratio}
\frac{\gamma(m+1)+\gamma(m-1)}{\gamma(m)} = b
\end{equation}
for some constant $b$ with $\phi<b<\phi+\phi^{-1}$. This leads to
\begin{equation}
\label{recursion}
\gamma(m+1) = b\gamma(m) - \gamma(m-1).
\end{equation}
Moreover, if $\gamma(m)=r(m)=0$ for some $m$, it is straightforward to verify that \eqref{recursion} holds in this case as well. Thus \eqref{recursion} holds for all $m\in\Z$. Since covariance functions are necessarily symmetric, we obtain an ''initial'' condition $\gamma(1)=\frac{b}{2}\gamma(0)$. Thus \eqref{recursion} admits a unique symmetric solution. 

From $\gamma(1)=\frac{b}{2}\gamma(0)$ it is clear that \eqref{recursion} does not define covariance function for $b>2$. Furthermore, since $\phi>0$, it suffices to study the regime $b\in[0,2]$ (we include the trivial case $b=0$). 
For $b=2$ this corresponds to the case $X_t = X_0$ for all $t\in \Z$ which is hardly interesting. Similarly, the case $b=0$ leads to a process $(\ldots,X_0,X_1,-X_0,-X_1,X_0,X_1,\ldots)$ which again does not provide a practical model. On the other hand, it is not clear whether for some other values $b\in(0,2)$ Equation \eqref{recursion} can lead to some non-trivial model in which estimation procedure explained above cannot be applied. It turns out that, for any $b\in[0,2]$, Equation \eqref{recursion} defines a covariance function. On the other hand, the resulting covariance function, denoted by $\gamma_b$,  leads to a model that is either not very interesting. 
\begin{theorem}
\label{thm:main}
Let $b\in(0,2)$ and $\gamma_b$ be the (unique) symmetric function satisfying \eqref{recursion}. Then
\begin{enumerate}
\item Let $b  = 2\sin{\left(\frac{k}{l}\frac{\pi}{2}\right)}$,
where $k$ and $l$ are strictly positive integers such 
that $\frac{k}{l}\in(0,1)$. Then $\gamma_b(m)$ is periodic.
\item Let $b  = 2\sin{\left(r\frac{\pi}{2}\right)}$, where $r\in(0,1)\setminus \mathbb{Q}$. Then for any $M\geq 0$, the set $\{\gamma_b(M+m):m\geq 0\}$ is dense in $[-\gamma(0),\gamma(0)]$.
\item For any $b\in[0,2]$, $\gamma_b(\cdot)$ is a covariance function.
\end{enumerate}
\end{theorem}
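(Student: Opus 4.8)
The plan is to solve the linear recurrence \eqref{recursion} in closed form and then read off all three claims. The characteristic equation associated with \eqref{recursion} is $x^2 - bx + 1 = 0$, whose roots $\frac{b}{2} \pm \frac{i}{2}\sqrt{4-b^2}$ lie on the unit circle for $b\in(0,2)$ and can be written as $e^{\pm i\theta}$ with $\theta = \arccos(b/2)\in(0,\pi)$. The general solution is therefore $A\cos(m\theta)+B\sin(m\theta)$, and imposing the symmetric initial condition $\gamma_b(0)=\gamma(0)$, $\gamma_b(1)=\frac{b}{2}\gamma(0)=\gamma(0)\cos\theta$ forces $B=0$. Hence
\[
\gamma_b(m) = \gamma(0)\cos(m\theta), \qquad \cos\theta = \frac{b}{2}.
\]
The key preparatory step is to translate the parametrisation $b=2\sin(\cdot)$ into a statement about $\theta/\pi$: since $\arccos(\sin x)=\frac{\pi}{2}-x$ for $x\in(0,\pi/2)$, in part (1) we obtain $\theta=\frac{\pi(l-k)}{2l}$, so $\theta/\pi$ is rational, while in part (2) we obtain $\theta=\frac{\pi(1-r)}{2}$, so $\theta/\pi=\frac{1-r}{2}$ is irrational.

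With the closed form in hand, parts (1) and (2) reduce to the behaviour of the orbit $\{m\theta \bmod 2\pi : m\ge 0\}$. When $\theta/\pi$ is rational, this orbit is a finite subset of the circle, so $m\mapsto\cos(m\theta)$ cycles through finitely many values and is periodic, giving (1). When $\theta/\pi$ is irrational, the orbit $\{m\theta\bmod 2\pi\}$ is dense in $[0,2\pi)$ by Weyl's equidistribution theorem (equivalently Kronecker's theorem); translating by the fixed phase $M\theta$ leaves $\{(M+m)\theta\bmod 2\pi : m\ge 0\}$ dense as well, and composing with the continuous surjection $\omega\mapsto\gamma(0)\cos\omega$ onto $[-\gamma(0),\gamma(0)]$ yields the density claim (2) for every $M\ge 0$.

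For part (3) I would invoke the Herglotz representation: a symmetric function on $\Z$ is a covariance function precisely when it is the Fourier transform of a finite nonnegative measure on $[-\pi,\pi]$. Taking $\mu_b=\frac{\gamma(0)}{2}\big(\delta_{\theta}+\delta_{-\theta}\big)$, with $\theta=\arccos(b/2)\in[0,\pi/2]$ for $b\in[0,2]$ (including the boundary cases $\theta=\pi/2$ at $b=0$ and $\theta=0$ at $b=2$), one checks immediately that $\int_{-\pi}^{\pi} e^{im\omega}\,d\mu_b(\omega)=\gamma(0)\cos(m\theta)=\gamma_b(m)$. Since $\mu_b$ is a finite nonnegative measure, $\gamma_b$ is positive semi-definite and hence a valid covariance function (the underlying process being a random-phase sinusoid of frequency $\theta$).

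The computations are light once the recurrence is solved, so the only genuine external input is the explicit solution together with the equidistribution theorem used in (2). I expect the main thing to get right is the arithmetic identity $\theta=\arccos(\sin(\cdot))$ that converts the hypothesis $b=2\sin(\cdot)$ into the rationality or irrationality of $\theta/\pi$, since this is exactly what separates the periodic regime from the dense one; everything else is then a routine consequence of the formula $\gamma_b(m)=\gamma(0)\cos(m\theta)$.
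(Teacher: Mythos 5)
Your proof is correct, but it takes a genuinely different and more conceptual route than the paper. You solve \eqref{recursion} through its characteristic polynomial $x^2-bx+1$ and obtain $\gamma_b(m)=\gamma(0)\cos(m\theta)$ with $\cos\theta=b/2$ in one stroke, whereas the paper guesses the equivalent formula \eqref{solution} (the sign factors there are absorbed by the identity $\theta=\frac{\pi}{2}-\arcsin(b/2)$, exactly the conversion you highlight) and verifies it case by case modulo $4$ in Proposition \ref{prop:solution}. For part (2) you invoke Weyl/Kronecker equidistribution of the irrational rotation, where the paper runs a self-contained excluded-interval contradiction argument on the circle; your closed form also removes the paper's need to restrict to the residue class $m\equiv 0 \pmod{4}$, and you correctly handle the tail $\{(M+m)\theta\}$ since the forward orbit of an irrational rotation from any starting point is dense. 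The largest divergence is part (3): your two-atom spectral measure $\mu_b=\frac{\gamma(0)}{2}\left(\delta_{\theta}+\delta_{-\theta}\right)$ combined with the Herglotz theorem treats every $b\in[0,2]$ uniformly, while the paper proves positive semidefiniteness only for rational parameters via an eigenvalue computation (Lemmas \ref{lemma:sumcos} and \ref{lemma:eigenvalues}: $\mathrm{rank}(C)\leq 2$ together with trace and Frobenius-norm identities) and then reaches irrational $b$ by a density-and-continuity limit over $Q$. What each approach buys: yours is shorter, identifies the underlying process as a random-phase sinusoid (which explains conceptually why $\mathrm{rank}(C)\leq 2$ and why the model is degenerate), and covers the endpoints $b=0,2$ with no extra work --- the only point deserving a word is $b=2$, where the characteristic root is double, so the formula $\gamma_b(m)=\gamma(0)$ there should be confirmed by the one-line direct check rather than by the distinct-roots ansatz; the paper's proof, by contrast, is entirely elementary (no Bochner--Herglotz, no equidistribution) and yields additional structural information, namely the exact nonzero eigenvalues $2l$ (or $4l$) of the $4l\times 4l$ covariance matrix and an explicit periodic process realizing $\gamma_b$ in the rational case.
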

In many applications of stationary processes, it is assumed that the covariance function $\gamma(\cdot)$ vanishes at infinity, or that $\gamma(\cdot)$ is periodic. Note that the latter case corresponds simply to the analysis of finite-dimensional random vectors with identically distributed components. Indeed, $\gamma(m)=\gamma(0)$ implies $X_n = X_0$ almost surely, so periodicity of $\gamma(\cdot)$ with period $N$ implies that there exists at most $N$ random variables as the source of randomness. By items (2) and (3) of Theorem \ref{thm:main}, we observe that, for suitable values of $b$, \eqref{recursion} can be used to construct covariance functions that are neither periodic nor vanishing at infinity. On the other hand, in this case there are arbitrary large lags $m$ such that $\gamma_b(m)$ is arbitrary close to $\gamma_b(0)$. Consequently, it is expected that different estimation procedures fail. Indeed, even the standard covariance estimators are not consistent. A consequence of Theorem \ref{thm:main} is that only a little structure in the noise $Z$ is needed in order to apply the estimation procedure of the parameter $\phi$ introduced in \cite{vouti}, provided that one has consistent estimators for the covariances of $X$. The following is a precise mathematical formulation of this observation. 
\begin{theorem}
\label{thm:estimation}
Let $X$ be given by \eqref{eq:ar1} for some $\phi\in(0,1)$ and noise $Z$. Assume that there exists $\epsilon>0$ and $M\in\N$ such that $r(m) \leq r(0)(1-\epsilon)$ or $r(m)\geq -r(0)(1-\epsilon)$ for all $m\geq M$. Then the covariance function $\gamma$ of $X$ does not satisfy \eqref{recursion} for any $b\in[0,2]$. 
\end{theorem}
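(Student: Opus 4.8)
The plan is to argue by contradiction: assume that the autocovariance $\gamma$ of $X$ satisfies the recursion \eqref{recursion} for some $b\in[0,2]$, and show that then $r$ must return arbitrarily close to $r(0)$ along arbitrarily large lags, contradicting the hypothesis. The first step is to translate the recursion for $\gamma$ into a relation for the noise covariance $r$. Writing $Z_t = X_t - \phi X_{t-1}$ from \eqref{eq:ar1} and expanding the covariance, one gets $r(m) = (1+\phi^2)\gamma(m) - \phi(\gamma(m+1)+\gamma(m-1))$; substituting $\gamma(m+1)+\gamma(m-1) = b\gamma(m)$ from \eqref{recursion} yields
\begin{equation*}
r(m) = a\,\gamma(m), \qquad a := 1+\phi^2-\phi b .
\end{equation*}
Since $\phi\in(0,1)$ and $b\le 2$, we have $a \ge (1-\phi)^2 > 0$; assuming the nondegenerate case $\gamma(0)>0$, this gives $r(0)=a\gamma(0)>0$, so that the threshold satisfies $r(0)(1-\epsilon)<r(0)$ strictly. (This reduction is precisely the equivalence between \eqref{eq:r-gamma-ratio} and \eqref{eq:gamma-ratio} already recorded in the text; the underlying closed form, coming from the unit-modulus characteristic roots of \eqref{recursion}, is $\gamma_b(m)=\gamma(0)\cos(m\arccos(b/2))$, which makes transparent that $\gamma_b$ repeatedly revisits $\gamma(0)$.)

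Next I would invoke Theorem \ref{thm:main} to control the large-lag behaviour of $\gamma=\gamma_b$ and transfer it to $r$ through $r=a\gamma$ with $a>0$. If $b=2\sin(\tfrac{k}{l}\tfrac{\pi}{2})$ is of the rational type, then by item (1) the function $\gamma_b$ is periodic, say with period $N$; since $\gamma_b(0)=\gamma(0)$, periodicity gives $\gamma_b(kN)=\gamma(0)$ and hence $r(kN)=a\gamma(0)=r(0)$ for every $k$. If instead $b=2\sin(r\tfrac{\pi}{2})$ with $r$ irrational, then by item (2) the set $\{\gamma_b(M+m):m\ge0\}$ is dense in $[-\gamma(0),\gamma(0)]$, so for the given $M$ there is an arbitrarily large lag $m\ge M$ with $\gamma_b(m)>\gamma(0)(1-\epsilon)$, whence $r(m)=a\gamma_b(m)>r(0)(1-\epsilon)$. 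The two endpoints are immediate: for $b=2$ one has $\gamma_b\equiv\gamma(0)$ and $r\equiv r(0)$, and for $b=0$ the solution is the $4$-periodic sequence $\gamma(0),0,-\gamma(0),0,\dots$, so again $r(4k)=r(0)$. Thus, for every $b\in[0,2]$, there exist arbitrarily large lags $m$ at which $r(m)$ equals, or is arbitrarily close to, $r(0)$.

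Finally, this contradicts the standing assumption that $r(m)\le r(0)(1-\epsilon)$ for all $m\ge M$, since we have exhibited $m\ge M$ with $r(m)>r(0)(1-\epsilon)$. Hence $\gamma$ cannot satisfy \eqref{recursion} for any $b\in[0,2]$, as claimed. (In the dense case, item (2) also produces lags where $r(m)$ is arbitrarily close to $-r(0)$, giving the symmetric contradiction with the lower bound; note though that in the periodic case $r$ need not approach $-r(0)$, so it is the upper bound $r(m)\le r(0)(1-\epsilon)$ that does the work uniformly in $b$.) The genuinely delicate points here are algebraic rather than analytic: one must verify $a>0$ strictly, so that proximity of $\gamma_b(m)$ to $\gamma(0)$ transfers to proximity of $r(m)$ to $r(0)$, and $r(0)>0$, so that the target value strictly exceeds the threshold $r(0)(1-\epsilon)$. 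The recurrence/density input that $\gamma_b$ keeps revisiting $\gamma(0)$ is supplied by Theorem \ref{thm:main} and is therefore not an obstacle.
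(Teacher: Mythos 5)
Your core argument coincides with the paper's proof of its first case: assuming \eqref{recursion}, the ratio $r(m)/\gamma(m)$ is constant (the paper cites the recorded equivalence of \eqref{eq:r-gamma-ratio} and \eqref{eq:gamma-ratio}; you make it explicit via $r(m)=(1+\phi^2)\gamma(m)-\phi(\gamma(m+1)+\gamma(m-1))=(1+\phi^2-\phi b)\gamma(m)$, with the useful observation $a=1+\phi^2-\phi b\geq(1-\phi)^2>0$), and Theorem \ref{thm:main} supplies arbitrarily large lags $m^*\geq M$ with $\gamma(m^*)$ equal to $\gamma(0)$ (periodic case, since $\gamma(4l+m)=\gamma(m)$) or arbitrarily close to it (dense case), whence $r(m^*)>r(0)(1-\epsilon)$. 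This correctly refutes the branch ``$r(m)\leq r(0)(1-\epsilon)$ for all $m\geq M$'', exactly as the paper does; your explicit constant, the closed form $\gamma_b(m)=\cos(m\arccos(b/2))$, and the endpoint checks $b\in\{0,2\}$ are harmless additions.

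The gap is the second branch of the hypothesis. The assumption is a disjunction of two one-sided bounds, and the paper's proof treats them as two separate cases (``Treating the case $r(m)\geq -r(0)(1-\epsilon)$ for all $m\geq M$ similarly concludes the proof''). You derive a contradiction only from the upper bound: if instead only $r(m)\geq -r(0)(1-\epsilon)$ is assumed for $m\geq M$, then exhibiting lags where $r(m)$ is close to $r(0)$ contradicts nothing, and your closing claim that the upper bound ``does the work uniformly in $b$'' is a non sequitur, because that bound is simply not available in this branch. The intended symmetric argument (implicit in the paper's ``similarly'') needs $m^*\geq M$ with $\gamma(m^*)\leq-\gamma(0)(1-\epsilon/2)$; this exists in the dense case and in many periodic cases (for $k/l\in Q$ one even has $\gamma(2l)=-\gamma(0)$), but your own parenthetical remark -- that in the periodic case $r$ need not approach $-r(0)$ -- is sharper than you treat it: for instance $b=2\sin\bigl(\tfrac{1}{5}\tfrac{\pi}{2}\bigr)$ gives $\gamma_b(m)=\cos(2\pi m/5)\geq\cos(4\pi/5)\approx-0.809$, so $r(m)\geq -r(0)(1-\epsilon)$ holds for \emph{all} $m$ with, say, $\epsilon=0.1$, even though \eqref{recursion} holds. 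So the lower-bound branch cannot be refuted for such $b$ at all; this is an elision already present in the paper's ``similarly'', but the paper at least addresses the case, whereas your write-up explicitly leaves it open. To repair your proof you must either carry out the symmetric argument in the cases where $\gamma_b$ does come within $\epsilon/2$ of $-\gamma(0)$ and flag the periodic obstruction, or read the hypothesis as the two-sided bound $|r(m)|\leq r(0)(1-\epsilon)$, under which your one-sided argument is complete; as written, the proposal does not prove the statement in the disjunctive form in which it is stated.
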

We end this section by visual illustrations of the covariance functions defined by \eqref{recursion}. In these examples we have set $\gamma_b(0) = 1$. In Figures \ref{fig:kandl} and \ref{fig:kandl2} we have illustrated the case of item (1) of Theorem \ref{thm:main}. Note that in Figure \ref{fig:1per3} we have $b=2\sin \left(\frac{1}{3}\frac{\pi}{2}\right) = 1$. Figure \ref{fig:kandl2} demonstrates how $k$ can affect the shape of the covariance function. Finally, Figure \ref{fig:1.7}  illustrates the case of item (2) of Theorem \ref{thm:main}.

\begin{figure}[h!]
\centering
  \begin{subfigure}[t]{0.49\textwidth}
   \includegraphics[width=\textwidth]{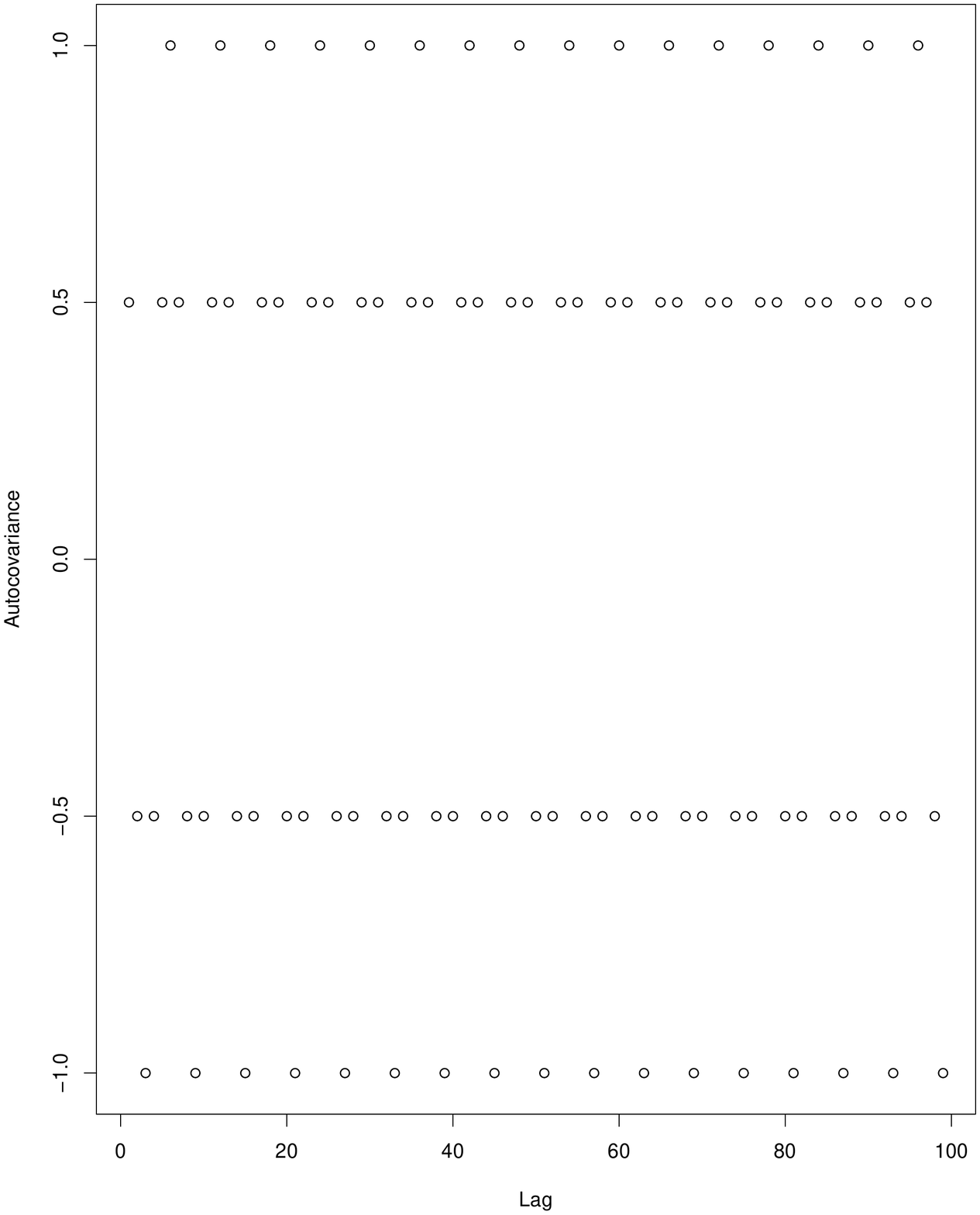}
    \caption{$k=1$ and $l=3$.}
    \label{fig:1per3}
  \end{subfigure}
 \begin{subfigure}[t]{0.49\textwidth}
    \includegraphics[width=\textwidth]{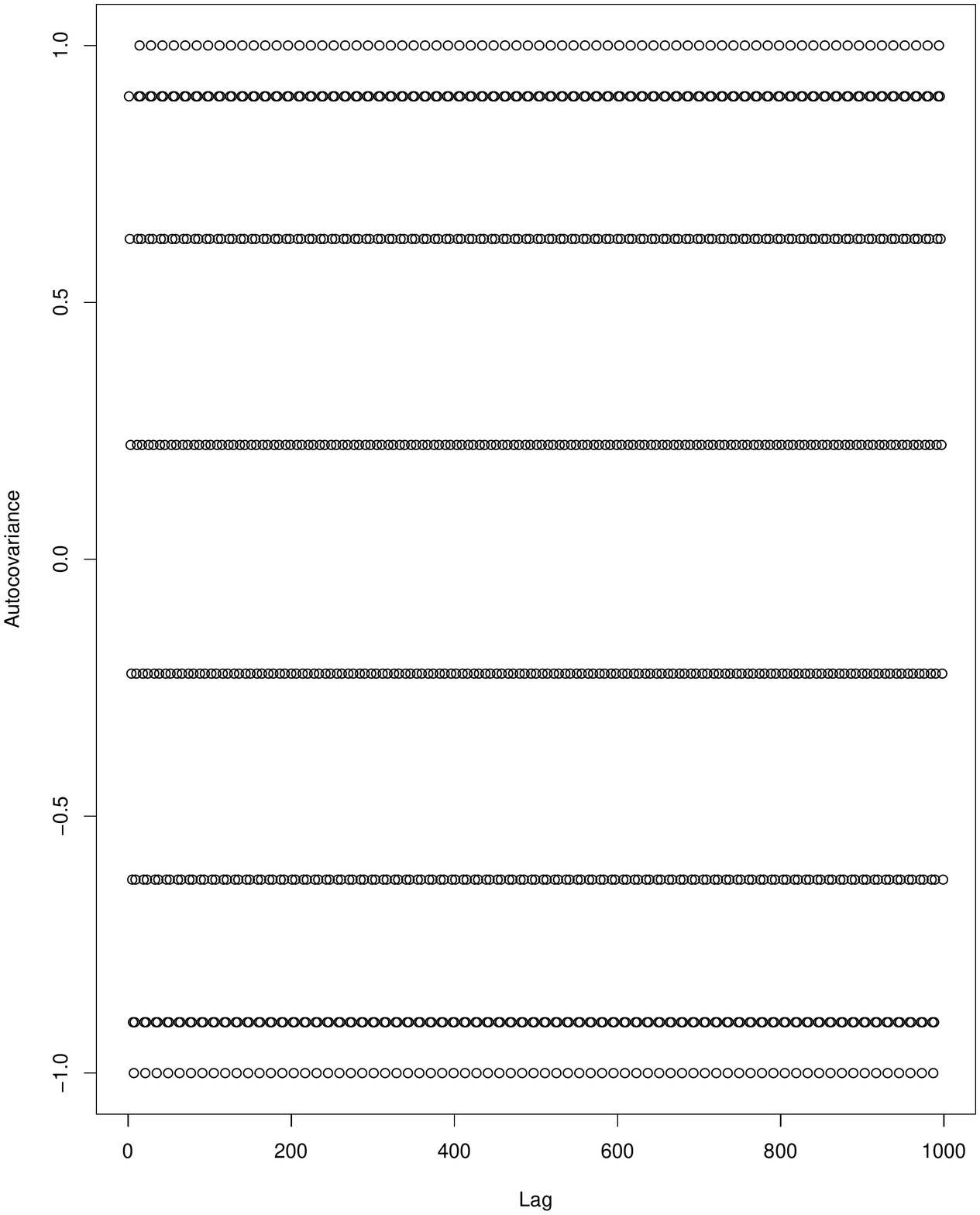}
    \caption{$k=5$ and $l=7$.}
   \label{fig:5per7}
 \end{subfigure}
\caption{Examples of covariance functions corresponding to $b = 2\sin \left(\frac{k}{l}\frac{\pi}{2}\right)$.}
\label{fig:kandl}
\end{figure}

\begin{figure}[h!]
\centering
  \begin{subfigure}[t]{0.49\textwidth}
   \includegraphics[width=\textwidth]{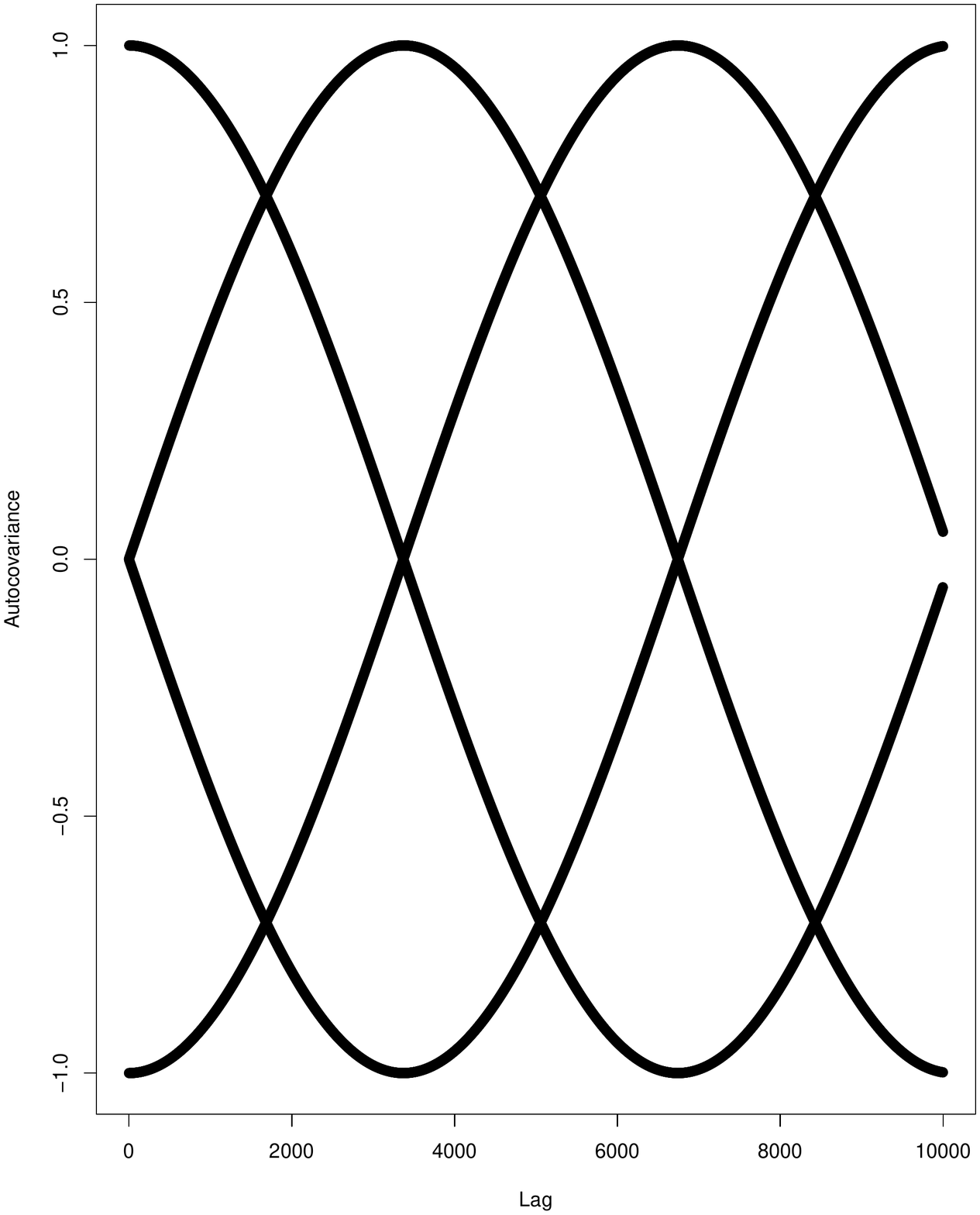}
    \caption{$k=1$ and $l=3371$.}
    \label{fig:1per3371}
  \end{subfigure}
 \begin{subfigure}[t]{0.49\textwidth}
    \includegraphics[width=\textwidth]{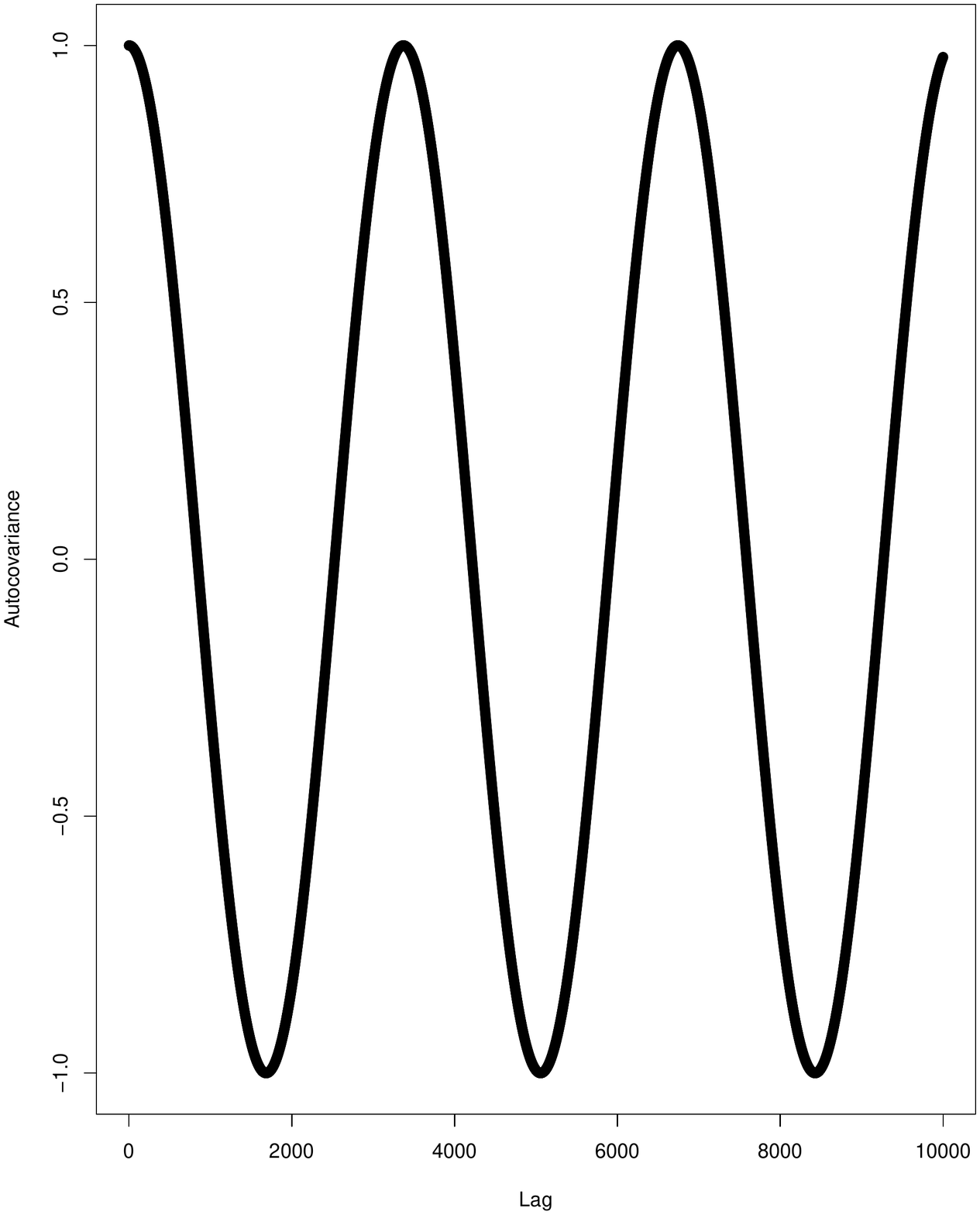}
    \caption{$k=3367$ and $l=3371$.}
   \label{fig:3367per3371}
 \end{subfigure}
\caption{Examples of covariance functions corresponding to $b = 2\sin \left(\frac{k}{l}\frac{\pi}{2}\right)$.}
\label{fig:kandl2}
\end{figure}

\begin{figure}[h!]
\centering
  \begin{subfigure}[t]{0.49\textwidth}
   \includegraphics[width=\textwidth]{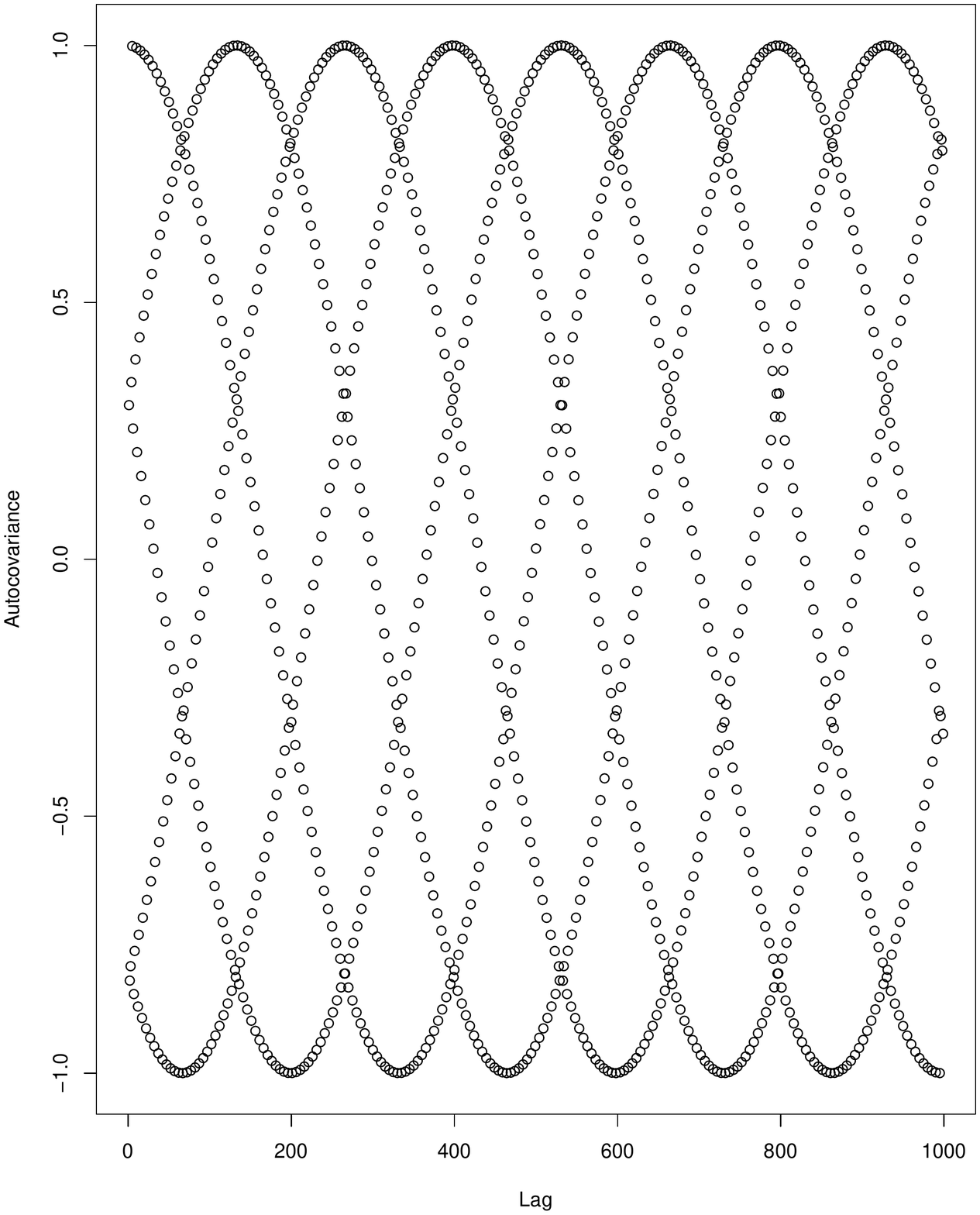}
    \caption{$b=0.6$.}
    \label{fig:0.6}
  \end{subfigure}
 \begin{subfigure}[t]{0.49\textwidth}
    \includegraphics[width=\textwidth]{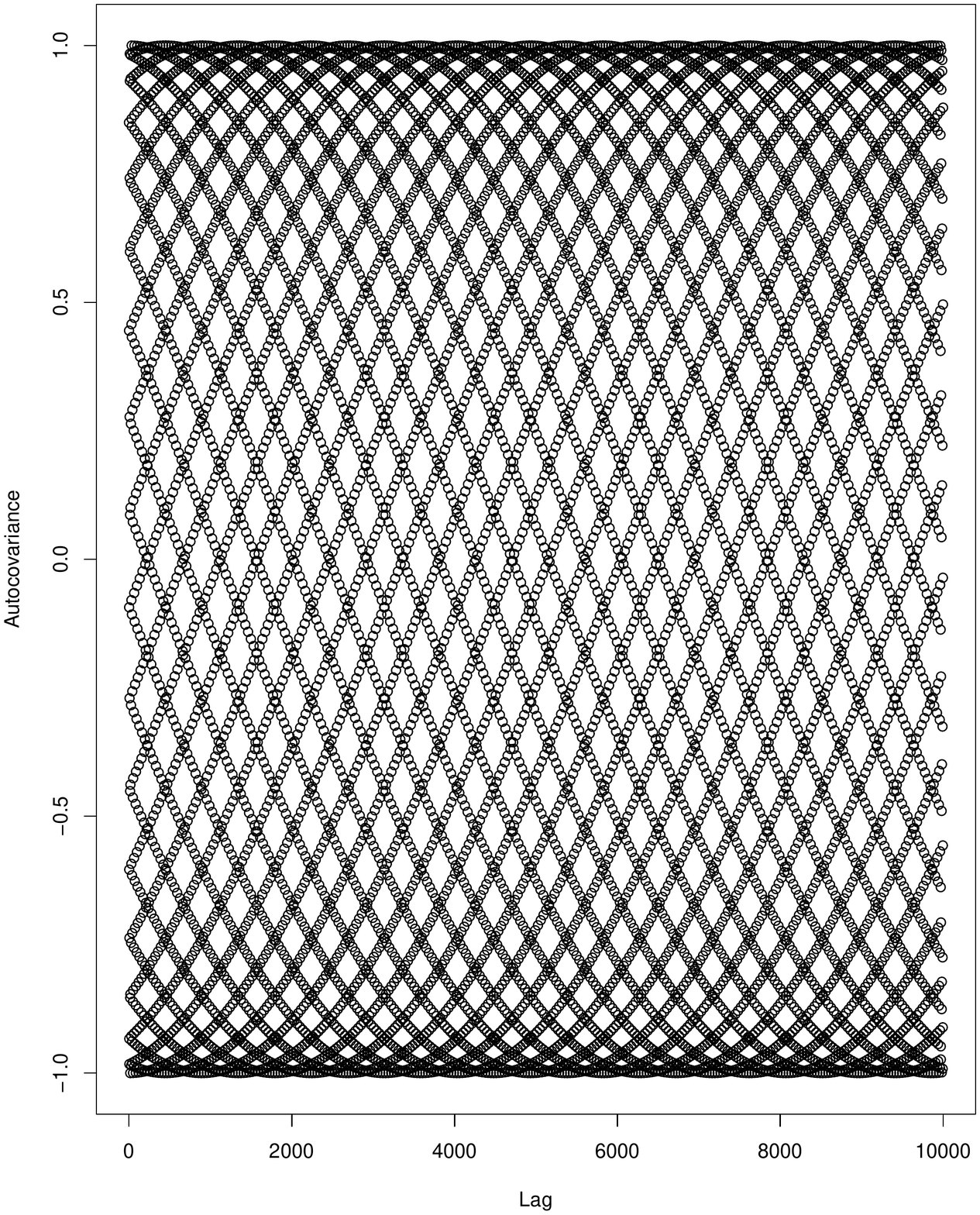}
    \caption{$b=1.7$.}
   \label{fig:1.7}
 \end{subfigure}
\caption{Examples of covariance functions corresponding to $b = 2\sin \left(r\frac{\pi}{2}\right)$.}
\label{fig:r}
\end{figure}

\section{Proofs}
\label{sec:proofs}
Throughout this section, without loss of generality, we assume $\gamma_b(0)=1$. We also drop the sub-index and simply denote $\gamma$. The following first result gives explicit formula for the solution to \eqref{recursion}.
\begin{prop}
\label{prop:solution}
The unique symmetric solution to \eqref{recursion} is given by
\begin{equation}
\label{solution}
\gamma(m) = \left\{
\begin{array}{ll}
(-1)^{\frac{m}{2}} \cos{\left(m\arcsin{\left(\frac{b}{2}\right)}\right)},\quad&\text{for $m$ is even}\\
(-1)^{\frac{(m-1)}{2}}  \sin{\left(m\arcsin{\left(\frac{b}{2}\right)}\right)},\quad&\text{for $m$ is odd}.
\end{array}
\right.
\end{equation}
\end{prop}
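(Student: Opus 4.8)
The plan is to treat \eqref{recursion} as what it is: a homogeneous linear second-order recurrence with constant coefficients, normalised by $\gamma(0)=1$. First I would pin down the correct initial data and, simultaneously, uniqueness. Evaluating the recurrence at $m=0$ gives $\gamma(1) = b\gamma(0) - \gamma(-1)$, and since any admissible solution is symmetric, $\gamma(-1)=\gamma(1)$; hence $\gamma(1) = \tfrac{b}{2}$. A symmetric solution is therefore completely determined: the two values $\gamma(0)=1$ and $\gamma(1)=\tfrac b2$ propagate via \eqref{recursion} to all $m\geq 1$, and symmetry fixes the negative indices. Consequently it suffices to exhibit \emph{one} symmetric solution with these two initial values, and the formula \eqref{solution} will then be \emph{the} solution.

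To produce it, I would solve the characteristic equation $x^2 - bx + 1 = 0$. For $b\in(0,2)$ the discriminant $b^2-4$ is negative, so the roots are complex conjugates of modulus $1$, namely $e^{\pm i\theta}$ with $\theta = \arccos(b/2)\in(0,\tfrac\pi2)$. The real general solution is thus $\gamma(m) = A\cos(m\theta) + B\sin(m\theta)$. Imposing $\gamma(0)=1$ forces $A=1$, and symmetry (oddness of $\sin$) forces $B=0$, leaving the compact form $\gamma(m)=\cos\!\big(m\arccos(b/2)\big)$. Equivalently, one can bypass the characteristic equation and verify directly, via $\cos((m+1)\theta)+\cos((m-1)\theta)=2\cos\theta\,\cos(m\theta)$ with $\cos\theta=b/2$, that $\cos(m\theta)$ satisfies \eqref{recursion}, is even in $m$, and takes the values $1$ and $b/2$ at $m=0,1$.

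It remains to rewrite this in the stated $\arcsin$ form. Writing $\alpha=\arcsin(b/2)$ and using $\arccos(b/2)=\tfrac\pi2-\alpha$, I would expand
\begin{equation}
\cos\!\big(m(\tfrac\pi2-\alpha)\big) = \cos\!\big(\tfrac{m\pi}{2}\big)\cos(m\alpha) + \sin\!\big(\tfrac{m\pi}{2}\big)\sin(m\alpha),
\end{equation}
and evaluate the two prefactors according to the parity of $m$: for even $m$ the sine prefactor vanishes and $\cos(m\pi/2)=(-1)^{m/2}$, while for odd $m$ the cosine prefactor vanishes and $\sin(m\pi/2)=(-1)^{(m-1)/2}$. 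Substituting these yields precisely the two cases of \eqref{solution}.

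I do not expect a genuine obstacle here; the only delicate point is bookkeeping the signs $(-1)^{m/2}$ and $(-1)^{(m-1)/2}$ correctly through the parity split, and checking that $b/2\in(0,1)$ keeps $\alpha$ in the principal range of $\arcsin$ so that the identities used are valid. The conceptual content — uniqueness from symmetry together with a constant-coefficient recurrence — is routine.
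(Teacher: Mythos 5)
Your proof is correct, but it takes a genuinely different route from the paper. The paper never solves the recurrence: it takes the stated parity formula as given and verifies by direct computation, splitting into the four residue classes of $m$ modulo $4$ and grinding through angle-addition identities with $A=\arcsin(b/2)$ (two of the four cases are written out, the others declared similar); uniqueness is disposed of beforehand exactly as you do, via $\gamma(-1)=\gamma(1)$ forcing $\gamma(1)=\tfrac b2\gamma(0)$. You instead derive the solution from the characteristic equation $x^2-bx+1=0$, obtaining the single compact form $\gamma(m)=\cos\bigl(m\arccos(b/2)\bigr)$, and then recover the paper's two-case $\arcsin$ formula from $\arccos(b/2)=\tfrac\pi2-\arcsin(b/2)$ and the parity of $\cos(m\pi/2)$, $\sin(m\pi/2)$. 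Your approach buys more: it explains where the formula comes from rather than merely checking it, replaces four mod-$4$ verifications by one two-line identity, makes the symmetry of the solution evident ($\cos$ is even), and the compact form $\cos(m\arccos(b/2))$ would in fact streamline later arguments in the paper (e.g.\ the density argument in Theorem \ref{thm:main}(2)). The paper's verification buys only self-containedness — no appeal to the solution theory of constant-coefficient recurrences. One small point to watch: since Theorem \ref{thm:main}(3) invokes the proposition for all $b\in[0,2]$, your characteristic-root argument degenerates at $b=2$ (double root $x=1$, where the general solution is $(A+Bm)$ and symmetry forces $B=0$, giving $\gamma\equiv 1$, consistent with the formula); alternatively, the direct verification you sketch via $\cos((m+1)\theta)+\cos((m-1)\theta)=2\cos\theta\cos(m\theta)$ is valid at the endpoints too, so either patch closes this without difficulty.
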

\begin{proof}
Clearly, $\gamma(m)$ given by \eqref{solution} is symmetric, and thus it suffices to consider $m\geq 0$. Moreover $\gamma(0) =1$ and $\gamma(1) = \frac{b}{2}$. We use the short notation $A = \arcsin{\left(\frac{b}{2}\right)}$ so that $\sin A = \frac{b}{2}$. Assume first $m+2 \equiv 2 \pmod{4}$. Then

\begin{equation*}
\begin{split}
\gamma(m+2) &= -\cos{\left((m+2)A\right)}\\ 
&= -\cos{(mA)}\cos{(2A)} + \sin{(mA})\sin{(2A)}\\
&= -\cos{(mA)}\left(1-2\sin^2A\right) + 2\sin{(mA})\sin{A}\cos{A}\\
&= -\cos{(mA)}\left(1-b\sin{A}\right)+ b\sin{(mA)}\cos{A}\\
&= b\left(\cos{(mA)}\sin{A} + \sin{(mA)}\cos{A}\right) - \cos{(mA)}\\
&= b\sin{\left((m+1)A\right)} - \cos{(mA)} \\
&=b\gamma(m+1) - \gamma(m).
\end{split}
\end{equation*}
Similarly, for $m+2 \equiv 3 \pmod{4}$ we observe

\begin{equation*}
\begin{split}
\gamma(m+2) &= -\sin{((m+2)A)}\\
 &= -\sin{(mA)}\cos{(2A)} - \sin{(2A)}\cos{(mA)}\\
&= -\sin{(mA)}(1- 2\sin^2A)- 2\sin{A}\cos{A}\cos{(mA)}\\
&= -b(\cos{A}\cos{(mA)} - \sin{(mA)}\sin{A}) - \sin{(mA)}\\
&= -b\cos{((m+1)A)} - \sin{(mA)}\\
&= b\gamma(m+1) - \gamma(m).
\end{split}
\end{equation*}
Treating cases $m+2 \equiv 0 \pmod{4}$ and $m+2 \equiv 1 \pmod{4}$ similarly, we deduce that \eqref{solution} satisfies \eqref{recursion}.
\end{proof}

\begin{rem}

Using \eqref{recursion} directly, we observe, for even $m\geq 1$, that 
\begin{footnotesize}
\begin{equation}
\label{even}
\gamma(m) = b^ m + \sum_{n=\frac{m}{2}}^{m-1} (-1)^ {m-n} \left(\binom{n}{m-n} b^{2n-m} + \binom{n}{m-n-1}\frac{b^{2n-m+2}}{2}\right).
\end{equation}
\end{footnotesize}
Similarly, for odd $m\geq 1$, we obtain
\begin{footnotesize}
\begin{equation}
\label{odd}
\gamma(m) = \sum_{n=\frac{m+1}{2}}^{m} (-1)^ {m-n} \binom{n}{m-n} b^{2n-m} + \sum_{n=\frac{m-1}{2}}^{m-1} (-1)^ {m-n} \binom{n}{m-n-1} \frac{b^{2n-m+2}  }{2}.
\end{equation}
\end{footnotesize}
These formulas are finite polynomial expansions, in variable $b$, of the functions presented in \eqref{solution} which could have been deduced also by using some well-known trigonometric identities.
\end{rem}
Before proving our main theorems we need several technical lemmas.
\begin{defi}
We denote with $Q$ a subset of rationals defined by 
\begin{equation*}
Q \coloneqq \left\{\frac{k}{l} : k,l \in \mathbb{N}, \frac{k}{l}\in (0,1), k - l\equiv 1 \pmod 2  \right\}
\end{equation*}
\end{defi}
\begin{rem}
The modulo condition above means only that either $k$ is even and $l$ is odd, or vice versa.
\end{rem}

\begin{lemma}
\label{lemma:sumcos}
Let $x=\frac{k}{l}\frac{\pi}{2}$, where $\frac{k}{l}\in Q$. Then

\begin{equation*}
\sum_{j=1}^ {2l-1} \cos^ 2{(jx)}(-1)^ j = -1.
\end{equation*}
\end{lemma}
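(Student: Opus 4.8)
The plan is to linearise the square and reduce everything to a geometric sum. Writing $x=\frac{k\pi}{2l}$ and using $\cos^2\theta=\tfrac12(1+\cos 2\theta)$, I would split the sum as
\[
\sum_{j=1}^{2l-1}(-1)^j\cos^2(jx)=\frac12\sum_{j=1}^{2l-1}(-1)^j+\frac12\sum_{j=1}^{2l-1}(-1)^j\cos(2jx).
\]
The first sum is elementary: it has $2l-1$ terms (an odd number) and begins at $j=1$, so pairing consecutive terms annihilates all but one $-1$, giving $\sum_{j=1}^{2l-1}(-1)^j=-1$ and a contribution of $-\tfrac12$.

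For the second sum I would absorb the alternating sign into the cosine. Since $(-1)^j=\cos(j\pi)$ and $\sin(j\pi)=0$, the addition formula yields $(-1)^j\cos(2jx)=\cos\bigl(j(2x+\pi)\bigr)$. Setting $\theta:=2x+\pi=\frac{k\pi}{l}+\pi=\frac{(k+l)\pi}{l}$, the remaining sum is the Dirichlet-type sum $\sum_{j=1}^{2l-1}\cos(j\theta)$, which I would evaluate through the complex geometric series $\sum_{j=0}^{2l-1}e^{ij\theta}=\frac{e^{i\,2l\theta}-1}{e^{i\theta}-1}$.

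The decisive observation is that $2l\theta=2\pi(k+l)$ is an integer multiple of $2\pi$, so $e^{i\,2l\theta}=1$ and the numerator vanishes; hence $\sum_{j=0}^{2l-1}e^{ij\theta}=0$, and taking real parts gives $\sum_{j=0}^{2l-1}\cos(j\theta)=0$. Subtracting the $j=0$ term, $\sum_{j=1}^{2l-1}\cos(j\theta)=-1$, so the second half contributes $-\tfrac12$ as well, and the total is $-1$, as claimed.

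The one step that genuinely needs care — and which I regard as the main obstacle — is justifying that the denominator $e^{i\theta}-1$ is nonzero, for otherwise the geometric-series identity breaks down and the sum would instead equal $2l$. Now $e^{i\theta}=1$ would require $\frac{k+l}{2l}\in\Z$, i.e. $k=(2m-1)l$ for some integer $m$; but the hypothesis $\frac{k}{l}\in(0,1)$ forces $0<k<l$, so $l<k+l<2l$ and no such $m$ exists. Thus membership in $Q$ keeps $\theta$ away from the degenerate angles in $2\pi\Z$, and this is precisely where the constraint $\frac{k}{l}\in(0,1)$ enters.
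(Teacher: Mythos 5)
Your proof is correct, and it takes a genuinely different route from the paper's. The paper folds the sum about the midpoint $j=l$: substituting $t=j-l$ turns the tail $\sum_{j=l+1}^{2l-1}$ into $\sum_{t=1}^{l-1}\cos^2\bigl(tx+k\tfrac{\pi}{2}\bigr)(-1)^{t+l}$, which equals a $\cos^2$ sum for $k$ even and a $\sin^2$ sum for $k$ odd, and the two parity cases allowed by $Q$ (the condition $k-l\equiv 1\pmod 2$) are then finished separately, via cancellation against the front half in one case and via $\cos^2+\sin^2=1$ in the other; the parity hypothesis is used essentially in that argument. You instead linearise with $\cos^2\theta=\tfrac12(1+\cos 2\theta)$, absorb the sign via $(-1)^j\cos(2jx)=\cos\bigl(j(2x+\pi)\bigr)$, and evaluate a full-period exponential sum, with the correct and necessary check that $e^{i\theta}\neq 1$ (which is exactly where $0<k<l$ enters, since $\tfrac{k+l}{2l}\in(\tfrac12,1)$ is never an integer). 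A noteworthy byproduct: your argument never invokes the parity condition in the definition of $Q$, so you have in fact proved the identity for \emph{every} pair of integers $0<k<l$, not just those with $k-l$ odd --- e.g.\ for $k=1$, $l=3$ (excluded from $Q$) direct computation confirms the sum is $-1$. So your proof is both shorter and strictly more general, at the mild cost of passing through complex exponentials, whereas the paper's proof stays within real trigonometric manipulations of the same flavour as its other computations but is tied to the parity structure it happens to need downstream.
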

\begin{proof}
We write

\begin{equation*}
\sum_{j=1}^ {2l-1} \cos^ 2{(jx)}(-1)^ j  = \cos^2{(lx)}(-1)^ l + \sum_{j=1}^ {l-1} \cos^ 2{(jx)}(-1)^j + \sum_{j=l+1}^ {2l-1} \cos^ 2{(jx)}(-1)^ j.
\end{equation*}
Change of variable $t=j-l$ gives

\begin{equation*}
\begin{split}
\sum_{j=l+1}^ {2l-1} \cos^ 2{(jx)}(-1)^ j &= \sum_{t=1}^{l-1} \cos^ 2{((t+l)x)}(-1)^ {t+l}\\
=  \sum_{t=1}^{l-1} \cos^ 2{(tx+k\frac{\pi}{2})}(-1)^ {t+l}
&= \left\{
\begin{array}{ll}
\sum_{t=1}^ {l-1} \cos^ 2{(tx)}(-1)^ {t+l}, \quad k\text{ even} \\ [0.1cm]
\sum_{t=1}^ {l-1} \sin^ 2{(tx)}(-1)^ {t+l}, \quad k\text{ odd}.
\end{array}
\right.
\end{split}
\end{equation*}
Consequently, for even $k$ and odd $l$ we have

\begin{equation*}
\sum_{j=1}^ {2l-1} \cos^ 2{(jx)}(-1)^ j = -\cos^2{\left(k\frac{\pi}{2}\right)} = -1.
\end{equation*}
Similarly, for odd $k$ and even $l$,

\begin{equation*}
\sum_{j=1}^ {2l-1} \cos^ 2{(jx)}(-1)^ j = \cos^2{\left(k\frac{\pi}{2}\right)} + \sum_{j=1}^{l-1}(-1)^ j = -1.
\end{equation*}
\end{proof}

\begin{lemma}
\label{lemma:eigenvalues}
Let $\gamma(\cdot)$ be given by \eqref{solution} with $
b  = 2\sin{\left(\frac{k}{l}\frac{\pi}{2}\right)}
$
for some $\frac{k}{l}\in Q$. Then the non-zero eigenvalues of the matrix

\begin{equation}
\label{eq:covariance-matrix}
C \coloneqq
\begin{bmatrix}
\gamma(0) & \gamma(1) & \gamma(2)&\cdots& \gamma(4l-1)\\
\gamma(1) & \gamma(0) &\gamma(1)&\cdots & \gamma(4l-2)\\
\gamma(2) & \gamma(1) & \gamma(0)& \cdots & \gamma(4l-3)\\
\vdots& \vdots&\vdots&\ddots&\vdots\\
\gamma(4l-2)&\gamma(4l-3)&\gamma(4l-4)&\cdots&\gamma(1)\\
\gamma(4l-1)& \gamma(4l-2)&\gamma(4l-3)&\cdots&\gamma(0)
\end{bmatrix}
\end{equation}
are either $2l$ with multiplicity of two or $4l$ with multiplicity of one. 
\end{lemma}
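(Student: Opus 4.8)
The plan is to collapse the two-branch formula \eqref{solution} into a single cosine and then exploit the resulting low rank of $C$. Set $A=\arcsin(b/2)=\frac{k}{l}\frac{\pi}{2}$ and
\[
\theta=A-\frac{\pi}{2}=\frac{(k-l)\pi}{2l}.
\]
First I would verify, directly from Proposition~\ref{prop:solution} by splitting $m$ modulo $4$ and expanding $\cos\!\big(mA-m\tfrac{\pi}{2}\big)=\cos(mA)\cos(m\tfrac{\pi}{2})+\sin(mA)\sin(m\tfrac{\pi}{2})$, that $\gamma(m)=\cos(m\theta)$ for all $m\ge 0$; by symmetry this then holds for every $m\in\Z$. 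This single identity (the ``well-known trigonometric identity'' hinted at in the Remark) drives the whole argument.

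Next, since $\gamma(|i-j|)=\cos((i-j)\theta)=\cos(i\theta)\cos(j\theta)+\sin(i\theta)\sin(j\theta)$, the matrix factors as $C=uu^{\top}+vv^{\top}$ with $u=(\cos(i\theta))_{i=0}^{4l-1}$ and $v=(\sin(i\theta))_{i=0}^{4l-1}$. Hence $C=WW^{\top}$ for $W=[\,u\ \ v\,]$, so $C$ has rank at most two and its non-zero eigenvalues coincide with those of the $2\times 2$ Gram matrix $G=W^{\top}W$, whose entries are $\|u\|^{2}$, $\|v\|^{2}$ and $\langle u,v\rangle$. (Equivalently, $\theta$ is an integer multiple of $2\pi/(4l)$, so $C$ is a circulant matrix and its spectrum is the discrete Fourier transform of its first row; this is the conceptual reason the non-zero eigenvalues are so rigid.) Everything thus reduces to three trigonometric sums.

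To evaluate them I would use $\cos^{2}(i\theta)=\cos^{2}(iA)$ for even $i$ and $\cos^{2}(i\theta)=\sin^{2}(iA)$ for odd $i$, giving $\|u\|^{2}=2l+\sum_{i=0}^{4l-1}(-1)^{i}\cos^{2}(iA)$ and $\|v\|^{2}=2l-\sum_{i=0}^{4l-1}(-1)^{i}\cos^{2}(iA)$. Splitting the full-period sum into two blocks of length $2l$ and using $2lA=k\pi$ shows the two blocks are equal, so the sum is $2\sum_{i=0}^{2l-1}(-1)^{i}\cos^{2}(iA)$; adding the $i=0$ term to Lemma~\ref{lemma:sumcos} makes each block vanish. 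Hence $\|u\|^{2}=\|v\|^{2}=2l$. For the off-diagonal, $\langle u,v\rangle=\tfrac12\sum_{i=0}^{4l-1}(-1)^{i}\sin(2iA)$ vanishes by the same block/periodicity reduction (or directly, as the imaginary part of a geometric sum with ratio a nontrivial root of unity). Therefore $G=2l\,I_{2}$ and the non-zero eigenvalues of $C$ are $2l$ with multiplicity two.

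Finally I would settle the stated dichotomy. The alternative value $4l$ with multiplicity one is exactly the degenerate case $v=0$, i.e.\ $\sin(i\theta)\equiv 0$, i.e.\ $\theta\in\pi\Z$; then $C=uu^{\top}$ with every $\cos^{2}(i\theta)=1$, so $\|u\|^{2}=4l$. Since $\theta=\frac{(k-l)\pi}{2l}\in\pi\Z$ forces $2l\mid(k-l)$, which is impossible for $\frac{k}{l}\in(0,1)$ (and in particular for $\frac{k}{l}\in Q$, where $k-l$ is odd), this branch never occurs under the hypothesis, so the first case always holds and the dichotomy is established. I expect the main obstacle to be the bookkeeping in the third step: reducing the period-$4l$ alternating sums to the range covered by Lemma~\ref{lemma:sumcos} via the identity $2lA=k\pi$, and checking that the off-diagonal Gram entry vanishes, since a single sign or index error there would corrupt both the eigenvalue $2l$ and its multiplicity. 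Once the pure-cosine form of $\gamma$ is in hand, the rank-two (indeed circulant) structure makes the eigenvalues essentially forced.
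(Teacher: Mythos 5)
Your proof is correct, but it takes a genuinely different route from the paper's. The paper never collapses \eqref{solution} into a single cosine: it bounds the rank by noting that the columns of $C$ inherit the recursion \eqref{recursion} (i.e. $c_i = bc_{i-1}-c_{i-2}$), and then pins down the at most two non-zero eigenvalues via the trace identities $\mathrm{tr}(C)=\lambda_1+\lambda_2=4l$ and $\mathrm{tr}(C^2)=\lambda_1^2+\lambda_2^2=\lVert C\rVert_F^2$, computing $\lVert C\rVert_F^2=8l^2$ through a \emph{weighted} alternating cosine sum that it reduces to Lemma \ref{lemma:sumcos}; solving the resulting quadratic gives $\lambda_1=\lambda_2=2l$ in the rank-two case, while the rank-one alternative ($\lambda_2=0$, hence $\lambda_1=4l$ by the trace) is simply left standing as a possible branch. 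You instead verify $\gamma(m)=\cos(m\theta)$ with $\theta=A-\tfrac{\pi}{2}$ (correct; the mod-$4$ check works out), factor $C=uu^{\top}+vv^{\top}$, and pass to the $2\times 2$ Gram matrix $G=W^{\top}W$, which you show equals $2lI_2$ using the same Lemma \ref{lemma:sumcos} but only in its unweighted form --- lighter bookkeeping than the paper's Frobenius computation. Your route buys strictly more: explicit eigenvectors, the circulant interpretation (indeed $\theta=(k-l)\tfrac{2\pi}{4l}$, so $C$ is circulant), and the extra fact that $G$ is nonsingular, so the rank is always exactly two for $\tfrac{k}{l}\in Q$ and the $4l$ branch of the stated dichotomy never actually occurs --- a point the paper's proof does not decide, and which your final step confirms independently since $2l\mid(k-l)$ is impossible for odd $k-l$. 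One small wrinkle: the ``same block/periodicity reduction'' does not by itself kill $\langle u,v\rangle$ --- it only halves the range, to $2\sum_{i=0}^{2l-1}(-1)^i\sin(2iA)$ --- so you genuinely need your parenthetical geometric-sum argument (ratio $e^{\mathrm{i}(k-l)\pi/l}$, a nontrivial $4l$-th root of unity precisely because the odd number $k-l$ cannot be a multiple of $2l$), or alternatively an $i\leftrightarrow 4l-i$ pairing exploiting the oddness of $\sin$; either closes the step, but as written that sentence slightly overstates what the block reduction alone delivers.
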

\begin{proof}
Let $c_i$ denote the $i$th column of $C$. Then, by the defining equation \eqref{recursion}, $c_i = bc_{i-1}- c_{i-2}$ for any $i\geq 3$. Consequently, there exists at most two linearly independent columns. Thus $rank(C)\leq 2$, which in turn implies that there exists at most two non-zero eigenvalues $\lambda_1$ and $\lambda_2$. In order to compute $\lambda_1$ and $\lambda_2$, we recall the following identities:

\begin{align}
 tr(C) &= \lambda_1 + \lambda_2 = 4l \label{trace1}\\
 tr(C^2) &= \lambda_1^2 + \lambda_2^2 = ||C||_F^2 \label{trace2},
\end{align}
where $||\cdot||_F$ is the Frobenius norm. If $rank(C)=1$, then $\lambda_2=0$ implying the second part of the claim. Suppose then $rank(C)=2$. Observing that the squared sum of the diagonals is $4l$ and, for $j=1,2,\ldots,4l-1$, a term $\gamma(j)$ appears in $C$ exactly $2(4l-j)$ times, we obtain
\begin{equation*}
\begin{split}
||C||_F^ 2 &= 4l + 2\sum_{j=1}^{4l-1} (4l-j)\gamma(j)^ 2.
\end{split}
\end{equation*}
Dividing the sum into two parts and using $\sin^ 2(x) = 1- \cos^2(x)$ we have
\begin{equation*}
\begin{split}
||C||_F^ 2 &= 4l+ 2\sum_{j=0}^{2l-1} (4l-(2j+1)) \gamma(2j+1)^ 2 + 2\sum_{j=1}^ {2l-1}(4l-2j)\gamma(2j)^ 2\\
&=4l + 2\sum_{j=0}^{2l-1} (4l - (2j+1))\sin^2{((2j+1)x)} + 2\sum_{j=1}^ {2l-1} (4l-2j)\cos^2{(2jx)}\\
&=4l + 2\sum_{j=0}^{2l-1} (4l - (2j+1)) + 2\sum_{j=1}^ {4l-1} (4l-j)\cos^2{(jx)}(-1)^j\\
&=8l^2 + 4l + 2\sum_{j=1}^ {4l-1} (4l-j)\cos^2{(jx)}(-1)^j,
\end{split}
\end{equation*}
where in the last equality we have used
\begin{equation*}
\sum_{j=0}^{2l-1} (4l - (2j+1)) = \sum_{j=0}^ {2l-1} (4l-1) - 2\sum_{j=0}^{2l-1} j = 2l(4l-1) + 2l(2l-1) = 4l^ 2.
\end{equation*}
Now 
\begin{equation}
\label{frobenius2}
\begin{split}
\sum_{j=1}^ {4l-1} (4l-j)\cos^2{(jx)}(-1)^j &= 2l+ \sum_{j=1}^{2l-1} (4l-j)\cos^2(jx)(-1)^ j\\
&\ + \sum_{j=2l+1}^ {4l-1} (4l-j)\cos^2(jx)(-1)^ j,
\end{split}
\end{equation}
where substitution $j = 4l-t$ yields

\begin{equation}
\label{frobenius3}
\begin{split}
\sum_{j=2l+1}^ {4l-1} (4l-j)\cos^2(jx)(-1)^ j &= \sum_{t=1}^ {2l-1} t \cos^ 2((4l-t)x)(-1)^{4l-t}\\
= \sum_{t=1}^ {2l-1} t\cos^2(2k\pi-tx)(-1)^t
&= \sum_{t=1}^ {2l-1} t\cos^2(tx)(-1)^t.
\end{split}
\end{equation}
Now \eqref{frobenius2}, \eqref{frobenius3}, and Lemma \ref{lemma:sumcos} imply
\begin{equation*}
||C||_F^ 2 = 8l^2+4l+2\left(2l+ 4l\sum_{j=1}^{2l-1} \cos^2(jx)(-1)^j\right) = 8l^ 2.
\end{equation*}
Finally, using \eqref{trace1} and \eqref{trace2} together with $||C||_F^2 = 8l^2$, we obtain
\begin{equation*}
\begin{split}
 \lambda_1^ 2 + (4l-\lambda_1)^ 2  - 8l^2= 2\lambda_1^ 2-8l\lambda_1+ 8l^2 = (\sqrt{2}\lambda_1 - \sqrt{8}l)^2 =0.
\end{split}
\end{equation*}
Hence $\lambda_1=\lambda_2=2l$. 
\end{proof}

We are now ready to prove Theorem \ref{thm:main} and Theorem \ref{thm:estimation}.

\begin{proof}[Proof the Theorem \ref{thm:main}]
Throughout the proof we denote $a_2 \equiv a_1 \pmod{2\pi}$ if $a_2 = a_1 + 2k\pi$ for some $k\in\mathbb{Z}$. That is, $a_1$ and $a_2$ are identifiable when regarding them as points on the unit circle. By $a_3 \in (a_1, a_2) \pmod{2\pi}$ we mean that $a_3 \equiv a \pmod{2\pi}$ for some $a\in (a_1,a_2)$.
\begin{enumerate}
\item 
Since
$
\arcsin{\left(\frac{b}{2}\right)} = \frac{k}{l}\frac{\pi}{2},
$
the first claim follows from Proposition \ref{prop:solution} together with the fact that functions $\sin(\cdot)$ and $\cos(\cdot)$ are periodic. In particular, we have $\gamma(4l+m) = \gamma(m)$ for every $m\in\Z$. 
\item 
Denote $A = \arcsin\left(\frac{b}{2}\right) = r\frac{\pi}{2}$. By Proposition \ref{prop:solution}, $mA$ is the corresponding angle for $\gamma(m)$ on the unit circle. Note first that, due the periodic nature of $\cos$ and $\sin$ functions, it suffices to prove the claim only in the case $M=0$. In what follows, we assume that $m\geq 0$. We show that the function $\gamma(m),m\equiv 0\pmod{4}$ is dense in $[-1,1]$, while a similar argument could be used for other equivalence classes as well. That is, we show that the function $cos(mA),m\equiv 0\pmod{4} $ is dense in $[-1,1]$. Essentially this follows from the observation that, as $r\notin\mathbb{Q}$, the function $m \mapsto cos(mA)$ is injective. Indeed, if $\cos(\tilde{m}A) = cos(mA)$ for some $\tilde{m}, m \geq 0$, $\tilde{m} \neq m$, it follows that 
\begin{equation*}
\tilde{m}A = \tilde{m}r\frac{\pi}{2} =\pm  mr\frac{\pi}{2} + k2\pi =\pm mA + k2\pi\qquad\text{for some }k\in\mathbb{Z}.
\end{equation*}
This implies 
\begin{equation*}
r = \frac{4k}{\tilde{m}\pm m},
\end{equation*}
which contradicts $r\notin\mathbb{Q}$. Since $\cos(mA)$ is injective, it is intuitively clear that $\cos(mA),m\equiv 0\pmod{4}$ is dense in $[-1,1]$. For a precise argument, we argue by contradiction and assume there exists an interval $(c_1, d_1) \subset [-1,1]$ such that $\cos(mA) \notin (c_1, d_1)$ for any $m\equiv 0\pmod{4}$. This implies that there exists an interval $(c_2, d_2) \subset [0, 2\pi]$ such that for every $m\equiv 0\pmod{4}$ it holds that $mA \notin (c_2,d_2)\pmod{2\pi}$. Without loss of generality, we can assume $c_2=0$ and that for some $m_0 \equiv 0\pmod{4}$ we have $m_0A \equiv 0 \pmod{2\pi}$. Let $m_n = m_0 + 4n$ with $n\in \N$ and denote by $\lfloor\cdot\rfloor$ the standard floor function. Suppose that for some $n\in\N$ and $p_n\in (-d_2,0)$ we have $m_nA \equiv p_n \pmod{2\pi}$. Since by injectivity $\frac{2\pi}{|p_n|}\notin\N$, we get $m_{n\lfloor\frac{2\pi}{|p_n|}\rfloor}A \in (0,d_2) \pmod{2\pi}$ leading to a contradiction. This implies that for every $n\in\N$ we have $m_nA\notin(-d_2,d_2)\pmod{2\pi}$ (for a visual illustration, see Figure \ref{fig:nogo1}). Similarly, assume next that $m_{n_1}A \equiv p_{n_1} \pmod{2\pi}$ and $m_{n_1+n_2}A - m_{n_1}A \in (-d_2, d_2) \pmod{2\pi}$. Then $m_{n_2}A \in (-d_2, d_2) \pmod{2\pi}$ which again leads to a contradiction (see Figure \ref{fig:nogo2}). This means that for an arbitrary point $p_n$ on the unit circle such that $m_nA \equiv p_n \pmod{2\pi}$, we get an interval $(p_n-d_2,p_n+d_2)$ (understood as an angle on the unit circle) such that this interval cannot be visited later. As the whole unit circle is covered eventually, we obtain the expected contradiction.

\begin{figure}[ht!]
\centering
\includegraphics[width=0.9\textwidth]{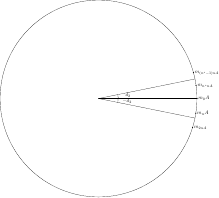}
\caption{Example of the excluded interval $(-d_2, d_2)$ around zero. Here $n^* = \lfloor\frac{2\pi}{|p_n|}\rfloor$, and we have visualized the points on the unit circle corresponding to the steps $0, n, 2n, (n^*-1)n$ and $n^*n$.}
\label{fig:nogo1}
\end{figure}

\begin{figure}[ht!]
\centering
\includegraphics[width=0.9\textwidth]{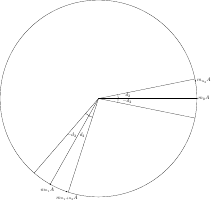}
\caption{Example of two excluded intervals and an angle $m_{n_1}A$.}
\label{fig:nogo2}
\end{figure}

\item
Consider first the case 
$
b  = 2\sin{\left(\frac{k}{l}\frac{\pi}{2}\right)}$,
where $\frac{k}{l}\in Q$. By Lemma \ref{lemma:eigenvalues}, the symmetric matrix $C$ defined by \eqref{eq:covariance-matrix} has non-negative eigenvalues, and thus $C$ is a covariance matrix of some random vector $(X_0,X_1,\ldots,X_{4l-1})$. Now it suffices to extend this vector to a process $X=(X_t)_{t\in\Z}$ by the relation $X_{4l+t} = X_t$. Indeed, it is straightforward to verify that $X$ has the covariance function $\gamma$. 

Assume next 
$
b  = 2\sin{\left(r\frac{\pi}{2}\right)}$,
where $r\in (0,1)\setminus Q$. We argue by contradiction and assume that there exists $k\in\mathbb{N}$, and vectors $t = (t_1, t_2,..., t_k)^T \in \mathbb{Z}^k$ and $a = (a_1, a_2,..., a_k)^T\in\mathbb{R}^k$ such that 

\begin{equation*}
\sum_{i, j = 1}^k a_i \gamma(t_i-t_j) a_j = -\epsilon\qquad\text{for some $\epsilon>0$,}
\end{equation*}
where $\gamma(\cdot)$ is the covariance function corresponding to the value $b$.
Since $Q$ is dense in $[0,1]$, it follows that there exists $(q_n)_{n\in\mathbb{N}} \subset Q$ such that $q_n\to r$. Denote the corresponding sequence of covariance functions with $\left(\gamma_n(\cdot)\right)_{n\in\mathbb{N}}$. By definition, 
\begin{equation*}
\sum_{i, j = 1}^k a_i \gamma_n(t_i-t_j) a_j \geq 0\qquad\text{for every $n$}.
\end{equation*}
On the other hand, continuity implies $\gamma_n(m) \to \gamma(m)$ for every $m$. This leads to  
\begin{equation*}
\lim_{n\to\infty} \sum_{i, j = 1}^k a_i \gamma_n(t_i-t_j) a_j = \sum_{i, j = 1}^k a_i \gamma(t_i-t_j) a_j =  -\epsilon
\end{equation*}
giving the expected contradiction.

\end{enumerate}

\end{proof}
\begin{rem}
Note that in the periodic case the covariance matrix $C$ defined by \eqref{eq:covariance-matrix} satisfies $rank(C)\leq 2$. Thus, in this case, the process $(X_t)_{t\in\Z}$ is driven linearly by only two random variables $Y_1$ and $Y_2$. In other words, we have 
$$
X_t = a_1(t)Y_1 + a_2(t)Y_2, \quad t\in\Z
$$
for some deterministic coefficients $a_1(t)$ and $a_2(t)$.
\end{rem}

\begin{proof}[Proof of Theorem \ref{thm:estimation}]
Suppose $\gamma$ satisfies \eqref{recursion}
and $r(m)\leq r(0)(1-\epsilon)$ for all $m\geq M$. By Theorem \ref{thm:main}, there exists $m^*\geq M$ such that 
$$
\gamma(m^*)\geq \gamma(0)\left(1-\frac{\epsilon}{2}\right).
$$ 
Furthermore, \eqref{recursion} implies \eqref{eq:r-gamma-ratio} for every $m$ such that $\gamma(m)\neq 0$. Now
$$
a_{m^*} = \frac{r(m^*)}{\gamma(m^*)} \leq \frac{r(0)(1-\epsilon)}{\gamma(0)\left(1-\frac{\epsilon}{2}\right)} < \frac{r(0)}{\gamma(0)} = a_0
$$
leading to a contradiction. Treating the case $r(m)\geq -r(0)(1-\epsilon)$ for all $m\geq M$ similarly concludes the proof.
\end{proof}

\bibliographystyle{plain}
\bibliography{bibli}

\end{document}